\documentclass[12pt,a4paper, oneside]{amsart}

\usepackage{amsmath, nicefrac, verbatim, amsfonts, mathtools, amssymb, upgreek, xcolor, bbm}
\usepackage{graphics, xspace, enumerate}
\usepackage{stix}
\usepackage{dsfont}
\usepackage[a4paper,margin=2.5cm]{geometry}
\usepackage[bb=boondox]{mathalfa}

\usepackage{graphicx}
\usepackage[colorlinks=true,citecolor=red,urlcolor=blue,linkcolor=red,bookmarksopen=true,unicode=true,pdffitwindow=true]{hyperref}
\usepackage[english]{babel}
\usepackage[languagenames,fixlanguage]{babelbib}
\hypersetup{pdfauthor={}}
\hypersetup{pdftitle={}}

\usepackage{seqsplit,cleveref}

\theoremstyle{plain}
\newtheorem{theorem}{Theorem}[section]

\newtheorem{proposition}[theorem]{Proposition}
\theoremstyle{definition}

\newcommand {\Prob} {\ensuremath{\mathbb{P}}}
\newcommand {\R} {\ensuremath{\mathbb{R}}}
\newcommand {\ZZ} {\ensuremath{\mathbb{Z}}}
\newcommand {\N} {\ensuremath{\mathbb{N}}}

\newcommand{\process}[1]{\{#1(x,i;t)\}_{t\geq0}}

\newcommand{\df}{\coloneqq}

\newcommand{\E}{\mathrm{e}}

\newcommand{\PP}{\mathcal{P}}
\newcommand{\p}{\mathsf{p}}

\newcommand{\X}{\mathrm{X}}

\newcommand{\D}{\mathrm{d}}

\numberwithin{equation}{section}

\title[Non-central limit theorem for periodic regime-switching stable-like processes]{A note on functional  non-central limit theorem for periodic regime-switching stable-like processes}

\author[N.\ Sandri\'{c}]{Nikola Sandri\'{c}}
\address[Nikola\ Sandri\'{c}]{University of Zagreb Faculty of Science, Department of Mathematics\\ Zagreb\\Croatia}
\email{nikola.sandric@math.pmf.unizg.hr}

\makeatletter
\@namedef{subjclassname@2020}{%
	\textup{2020} Mathematics Subject Classification}
\makeatother

\subjclass[2020]{60F05, 60H10, 60J25}
\keywords{functional non-central limit theorem,  regime-switching stable-like process, semimartingale characteristics}

\begin{document}
\allowdisplaybreaks[4]

\begin{abstract} In this paper, we establish a functional non-central limit theorem for regime-switching stable-like processes with periodic coefficients. Our approach relies on stochastic analysis methods, in particular semimartingale characteristics, combined with the ergodic theory of Markov processes. If the set on which the order function achieves its minimum value $\underline{\alpha}$ has positive Lebesgue measure, then the limit is a rotationally invariant $\underline{\alpha}$-stable Lévy process. Otherwise, the limit is the zero process.
\end{abstract}

\maketitle

\section{Introduction}\label{S1}

In recent years, regime-switching stochastic processes have become increasingly popular due to their wide range of applications. They have been used as mathematical models in problems related to signal processing, biological and ecological systems, queuing networks, financial engineering, and other fields (see, e.g., \cite{Xi-Yin-Zhu-2019} and \cite{Yin-Zhu-Book-2010}). The main characteristic of these processes is the interaction between the dynamics of the system and discrete events that provide information about its current regime. This feature enables such systems to describe complex phenomena involving inherent uncertainty and environmental randomness.

These processes are typically described by systems of stochastic differential equations, together with the transition probability laws governing the switching mechanism. In this article, we focus on regime-switching stable-like processes. These are regime-switching processes whose spatial dynamics are driven by pure jumps governed by a heavy-tailed law depending on both the spatial state and the current regime, while the regime-switching mechanism is Markovian, taking values in a finite set of regimes with state-dependent jump rates. More precisely, for $m,d\in\N$, let $[m]:=\{1,\dots,m\}$,  $\alpha:\R^d\times [m]\to(0,2)$, $\gamma:\R^d\times[m]\to (0,\infty)$, and let
$\mathrm{q}_{ij}:\R^d\to\R$ be such that  $\mathrm{q}_{ij}(x)\ge0$ for  $i\neq j,$ where $i,j\in[m]$, and 
$$\mathrm{q}_{ii}(x)\,=\,-\sum_{j\neq i}\mathrm{q}_{ij}(x)\,,\quad i\in[m]\,.$$ If $m=1$, we set 
$\mathrm{q}_{11}\equiv0$. Throughout the article, we impose the following assumption:

\smallskip

\begin{description}
	\item[(A1)](Regularity) The functions $\alpha(x,i)$, $\gamma(x,i)$ and $\mathrm{q}_{ij}(x)$ are Lipschitz continuous in the spatial variable. Moreover, $$0\,<\,\underline{\alpha}\df\inf_{(x,i)\in\R^d\times[m]} \alpha(x,i) \,\le\, \sup_{(x,i)\in\R^d\times[m]} \alpha(x,i) \,<\,2$$ $$0\,<\,\inf_{(x,i)\in\R^d\times[m]} \gamma(x,i) \,\le\, \sup_{(x,i)\in\R^d\times[m]} \gamma(x,i) \,<\,\infty\,.$$ 
\end{description}

\noindent Furthermore, let $\{\mathrm{L}(t)\}_{t\ge0}$  be a $d$-dimensional rotationally invariant $\underline{\alpha}$-stable   L\'evy process starting at the origin, with L\'evy measure $\upnu_\mathrm{L}(\D y)=|y|^{-d-\underline{\alpha}}\D y$. Let ${\rm N}_\mathrm{L}(\D t,\D y)$ denote the corresponding Poisson random measure. Further, let ${\rm N}_\mathrm{q}(\D t,\D u)$ be a Poisson random measure on $[0,\infty)\times\R$ with intensity measure $\D t\times\D u$. Assume that ${\rm N}_\mathrm{L}(\D t,\D y)$ and ${\rm N}_\mathrm{q}(\D t,\D u)$ are defined on a stochastic basis $(\Omega, \mathcal{F}, \{\mathcal{F}_t\}_{t\ge0},\Prob)$ satisfying the usual conditions, and that they are independent.
Next, let  $c:\R^d\times[m]\times\R^d\to\R^d$ be defined as $$c(x,i,y)\,\df\,\left(\frac{\underline{\alpha}}{\alpha(x,i)}\gamma(x,i)\right)^{1/\alpha(x,i)}\lvert y\lvert^{\underline{\alpha}/\alpha(x,i)-1}y\,.$$ For $x\in\R^d$ and $i,j\in[m]$ with $i\neq j$, let $\Delta_{ij}(x)$ be the consecutive (with respect to the lexicographic ordering on $[m]\times[m]$) left-closed, right-open intervals of the real line, each having length $\mathrm{q}_{ij}(x)$. 
For a set $A$,  its indicator function
is denoted by $\mathbb{1}_{A}$, and the open ball of radius $\rho>0$ around $x\in\R^d$ is denoted by $\mathscr{B}_\rho(x)$.
Define $h:\R^d\times[m]\times\R\to\R$ by $$h(x,i,u)\,\df\, \sum_{j\in[m]}(j-i)\mathbb{1}_{\Delta_{ij}(x)}(u)\,.$$
According to \cite{Zhu-Yin-Baran-2015} and \cite[Proposition 2.1]{Xi-Yin-2011},  (A1)	implies that the regime-switching stochastic differential equation  \begin{align*}\D\X(x,i;t)&\,=\,\int_{\mathscr{B}_1(0)}c(\X(x,i;t-),\Lambda(x,i;t-),y)\,( {\rm N}_\mathrm{L}(\D t,\D y)-\D t\times\upnu_\mathrm{L}(\D  y))\\ &\ \ \ \  +\int_{\mathscr{B}^c_1(0)}c(\X(x,i;t-),\Lambda(x,i;t-),y)\,{\rm N}_\mathrm{L}(\D t,\D y)\\
\D\Lambda(x,i;t)&\,=\,\int_{\R}h(\X(x,i;t-),\Lambda(x,i;t-), u)\,{\rm N}_\mathrm{q}(\D t,\D u)\\
(\X(x,i;0),\Lambda(x,i;0))&\,=\,(x,i)\in\R^d\times[m] \end{align*} 
admits a unique nonexplosive strong solution $\{(\X(x,i;t),\Lambda(x,i;t))\}_{t\ge0}$ that is a 
strong Markov process with c\`{a}dl\`{a}g sample paths. Furthermore, 
let $\{\mathcal{L}_i\}_{i\in[m]}$ be a family of (non-local) operators of the form
$$
\mathcal{L}_if(x)\,\df\, \int_{\R^d}\left( f(x+ y)-f(x)-\mathbb{1}_{\mathscr{B}_1(0)}(y)\langle  y,\nabla f(x)\rangle\right)\frac{\gamma(x,i)}{|y|^{\alpha(x,i)+d}}\D y\,,
$$ acting on functions $f:\R^d\to\R$ for which the above integral is well-defined, and let $\mathcal{Q}=(\mathrm{q}_{ij})_{i,j\in[m]}:\R^d\to\R^{m\times m}$.  For $I:[m]\to\R$, define $$\mathcal{Q}(x)I(i)\,\df\,\sum_{j\in[m]}\mathrm{q}_{ij}(x)I(j)\,.$$
In the same references it is shown that for any   $$f\in\mathcal{D}\,\df\,\left\{g\in\mathcal{C}^2(\R^d\times[m]):(x,i)\mapsto\int_{\mathscr{B}^c_1(0)}|g(x+c(x,i,y))|\,\upnu_\mathrm{L}(\D y)\ \text{ is locally bounded}\right\}$$  it holds that the process \begin{equation}\label{eq:1}\left\{f(\X(x,i;t),\Lambda(x,i;t))-f(x,i)-\int_0^t\mathcal{L}f(\X(x,i;s-),\Lambda(x,i;s-))\,\D s\right\}_{t\ge0}\end{equation}
is a local martingale, where
$$\mathcal{L}f(x,i)\,\df\, \mathcal{L}_if(x,i)+\mathcal{Q}(x)f(x,i)\,.$$ Observe that  $\mathcal{C}_b^2(\R^d\times[m])\subseteq\mathcal{D}.$ This justifies referring to $\{(\X(x,i;t),\Lambda(x,i;t))\}_{t\ge0}$ as a regime-switching stable-like process.
Here, for $k\in\N_0\cup\{\infty\}$, $\mathcal{C}_b^k(\R^d)$ denotes the space of $k$-times continuously differentiable functions on $\R^d$ that are bounded and have bounded derivatives of all orders up to $k$. In particular, $\mathcal{C}^0(\R^d)$ and $\mathcal{C}_b^0(\R^d)$, abbreviated as $\mathcal{C}(\R^d)$ and $\mathcal{C}_b(\R^d)$, denote the spaces of  continuous functions and bounded continuous functions on $\R^d$, respectively. For functions on $\mathbb{R}^d\times[m]$, we use the convention that $f:\mathbb{R}^d\times[m]\to\mathbb{R}$ belongs to  $\mathcal{C}^k(\mathbb{R}^d\times[m])$ or $\mathcal{C}_b^k(\mathbb{R}^d\times[m])$ if $f(\cdot,i)$ belongs to the respective space for each $i\in[m]$.

Due to their inherent complexity, processes of this type are generally difficult to analyze. Properties such as the (strong) Feller property, transience, (positive) recurrence, and ergodicity have been studied in \cite{Zhu-Yin-Baran-2015,Chen-Chen-Tran-Yin-2019,Cloez-Hairer-2015, Kunwai-Zhu-2020, Lazic-Sandric-2022, Nguyen-Yin-2018, Pinsky-Scheutzow-1992, Shao-2015,Shao-2015-2, Shao-2015-3,Shao-Xi-2013, Tong-Majda-2016,Xi-2009, Xi-Yin-2011, Xi-Zhu-2017, Yin-Zhu-Book-2010}. The present article is concerned with the derivation of a tractable approximation of such a process. In the spirit of classical periodic homogenization theory, we impose spatial periodicity of the coefficients together with rapid oscillation on small scales, which enables us to derive a homogenized process that constitutes a valid approximation of the original one and is characterized by constant coefficients. To make this precise, we impose the following assumption:

\smallskip

\begin{description}
			\item[(A2)] (Periodicity)  For some $(\tau_1,\dots, \tau_{d})^T\in (0,\infty)^{d}$, for all $(k_1,\dots, k_{d})^T\in\ZZ^d$ and $(x,i)\in\R^d\times[m]$,  $$\alpha(x+(k_1\tau_1,\ldots,k_d\tau_d),i)\,=\,\alpha(x,i)\,,\qquad  \gamma(x+(k_1\tau_1,\ldots,k_d\tau_d),i)\,=\,\gamma(x,i)$$
		and 
		$$\mathrm{q}_{ij}(x+(k_1\tau_1,\ldots,k_d\tau_d))\,=\,\mathrm{q}_{ij}(x)\,.$$
\end{description}

 \noindent The main goal of this article is to prove a functional non-central limit theorem (under an appropriate scaling)  for the spatial component of  $\{(\X(x,i;t),\Lambda(x,i;t))\}_{t\ge0}$. Under (A1)-(A3), where (A3) is stated below, we show that if the set $$\bigcup_{i\in[m]}\{x\in\R^d:\alpha(x,i)=\underline{\alpha}\}$$ has positive Lebesgue measure, then the sequence of processes $\{  \X(x,i;nt)/n^{1/\underline{\alpha}}\}_{t\ge0}$, $n\in\N$,  converges 
in the Skorokhod space
 of  c\`{a}dl\`{a}g functions equipped with the  $J_1$-topology to a rotationally invariant $\underline{\alpha}$-stable L\'evy process starting from the origin. Otherwise, if the aforementioned set has zero Lebesgue measure, the sequence converges to the zero process (see \Cref{T3.1}).

\subsection{Literature review}  
Limit theorems for stochastic processes constitute a central topic in probability theory. Classical results in this area primarily concern processes with stationary and independent increments. In recent years, there has been increasing interest in extending these results to more complex models, including processes with spatial inhomogeneities and regime-switching dynamics. In this article, we establish a functional non-central limit theorem for the spatial component (which is, in general, non-Markovian) of a class of regime-switching stable-like processes with periodic coefficients. Our analysis combines stochastic analysis techniques based on semimartingale characteristics with ergodic arguments adapted to periodic switching environments.
Related results in the non-switching regime setting can be found in \cite{Franke-2006, Franke-2007}, while certain generalizations are given in \cite{Huang-Duan-Song-2024}. Analogous results in the discrete-time Markov chain setting are discussed in \cite{Sandric-JOTP-2014}. Results for the case of a constant order function are presented in \cite{Chen-Chen-Kumagai-Wang-2021, Franke-2007, Huang-Duan-Song-2022, Kassmann-Piatnitski-Zhizhina-2019}.
To the best of our knowledge, the only related results in the regime-switching setting are those concerning the case where the spatial component is a classical diffusion process. In \cite{Sandric-2024}, the case where the regime-switching mechanism does not depend on the spatial position is considered, while in \cite{Pahlajani-2025} these results are generalized to the case of a spatially dependent regime-switching mechanism.

\subsection{Notation} 
We summarize some  notation used throughout the article. 
For a topological space $(T,\mathcal{T})$,  we let  $\mathfrak{B}(T)$ denote the Borel $\sigma$-algebra on $T$, and $\mathcal{B}(T)$  the space of $\mathfrak{B}(T)/\mathfrak{B}(\R)$-measurable functions. 
For $f\in\mathcal{B}(\R^d)$ we let $\|f\|_\infty\df\sup_{x\in\R^d}|f(x)|$ denote the supremum norm, and 
define $\mathcal{B}_b(\R^d)\df\{f\in\mathcal{B}(\R^d)\colon \|f\|_\infty<\infty\}$. As above, we use the convention that $f:\mathbb{R}^d\times[m]\to\mathbb{R}$ belongs to  $\mathcal{B}(\mathbb{R}^d\times[m])$, $\mathcal{B}_b(\mathbb{R}^d\times[m])$
 if $f(\cdot,i)$ belongs to the respective space for each $i\in[m]$.
 The gradient of $f\in\mathcal{C}^1(\mathbb{R}^d\times[m])$ is denoted by $\nabla f(x,i)=(\partial_1 f(x,i),\dots,\partial_d f(x,i))^T$. For a Borel measure $\upmu$ on $\mathfrak{B}(\mathbb{R}^d\times[m])$ and $f\in\mathcal{B}(\mathbb{R}^d\times[m])$, we write $\upmu(f)=\int_{\mathbb{R}^d\times[m]} f(x,i)\,\upmu(\D x\times\{i\})$. For $\tau=(\tau_1,\dots,\tau_d)^T \in (0,\infty)^d$, define the lattice $\mathbb{Z}^d_\tau \df \{ (\tau_1 k_1, \dots, \tau_d k_d)^T \colon (k_1,\dots,k_d)^T\in \mathbb{Z}^d\}$. For $x\in\mathbb{R}^d$, set $[x]_\tau \df \{y\in\mathbb{R}^d \colon x-y \in \mathbb{Z}^d_\tau\}$. The torus $\mathbb{T}^d_\tau\times[m]$ is defined by identifying opposite faces of $[0,\tau]\times[m] = [0,\tau_1]\times\cdots\times[0,\tau_d]\times[m]$, i.e.,
$$
\mathbb{T}^{d}_\tau\times[m]\,\df\,\{([x]_\tau,i)\colon (x,i)\in\R^{d}\times[m]\}\,.$$
 The covering map $\Pi_\tau: \mathbb{R}^d\times[m]\to \mathbb{T}^d_\tau\times[m]$ is given by $\Pi_\tau(x,i) \df ([x]_\tau,i)$.
A function $f:\mathbb{R}^d\times[m]\to\mathbb{R}$ is called $\tau$-periodic if $f(x+k_\tau,i)=f(x,i)$ for all $(x,k_\tau,i)\in \mathbb{R}^d\times \mathbb{Z}^d_\tau \times[m]$. Every $\tau$-periodic function is uniquely determined by its restriction to $[0,\tau]\times[m]$, which defines $f_\tau:\mathbb{T}^d_\tau\times[m]\to\mathbb{R}$ via $f_\tau([x]_\tau,i) \df f(x,i)$. Using this identification,  we often write $x$ instead of $[x]_\tau$, and $f(x,i)$ instead of $f_\tau(x,i)$.

\subsection{Organization of the article}  In the next section, we discuss structural and ergodic properties of the regime-switching stable-like process ${(\X(x,i;t),\Lambda(x,i;t))}_{t\ge0}$.
Then, in \Cref{S3}, we prove that, under an appropriate scaling, the spatial component of this process satisfies a functional non-central limit theorem with a rotationally invariant $\underline{\alpha}$-stable limit, provided the set $$\bigcup_{i\in[m]}\{x\in\R^d:\alpha(x,i)=\underline{\alpha}\}$$
has positive Lebesgue measure. Otherwise, the process converges to the zero process. This result constitutes the main limit theorem of the article.

\section{Structural properties of the  regime-switching stable-like  process}\label{S2}

Let $${\rm p}(t,(x,i),\D y\times\{j\})\,:=\,\Prob((\X(x,i;t),\Lambda(x,i;t))\in\D y\times\{j\})$$ be the transition kernel  of $\{(\X(x,i;t),\Lambda(x,i;t))\}_{t\ge0}$. In addition to the properties mentioned in Section \ref{S1}, it is shown in  \cite{Zhu-Yin-Baran-2015} and \cite[Theorem 3.3]{Xi-Yin-2011} that (A1) implies that
\begin{itemize}
	\item[(i)] the process $\{(\X(x,i;t),\Lambda(x,i;t))\}_{t\ge0}$ satisfies the $\mathcal{C}_b$-Feller property, i.e., the corresponding semigroup of linear operators $\{\mathcal{P}_t\}_{t\ge0}$, defined by $$\mathcal{P}_tf(x,i)\,:=\,\int_{\R^d\times[m]}f(y,j)\,\p(t,(x,i),\D y\times\{j\})\,,\qquad f\in \mathcal{B}_b(\R^d\times[m])\,,$$
	satisfies  $\mathcal{P}_t(\mathcal{C}_b(\R^d\times[m]))\subseteq\mathcal{C}_b(\R^d\times[m])$ for every $t\ge0$.
	
		\smallskip

	\item[(ii)] the process in \eqref{eq:1} has the following representation
	\begin{equation}
	\begin{aligned}\label{eq:2}&\int_0^t\int_{\R^d}\bigl(f(\X(x,i;s-)+c(\X(x,i;s-),\Lambda(x,i;s-),y),\Lambda(x,i;s-))\\&\hspace{1.5cm} -f(\X(x,i;s-),\Lambda(x,i;s-))\bigr) ({\rm N}_\mathrm{L}(\D s,\D y)- \D s\times\upnu_\mathrm{L}(\D  y)) \\&+ \int_0^t\int_{\R}\bigl(f(\X(x,i;s-)+h(\X(x,i;s-),\Lambda(x,i;s-),u),\Lambda(x,i;s-))\\&\hspace{1.5cm} -f(\X(x,i;s-),\Lambda(x,i;s-))\bigr) ({\rm N}_\mathrm{q}(\D s,\D u)-\D s\times\D  u)\,.
	\end{aligned}
	\end{equation}
	\end{itemize}

 Next, observe that due to $\tau$-periodicity of the functions $\alpha(x,i)$, $\gamma(x,i)$, $\mathrm{q}_{ij}(x)$, $c(x,i,y)$ and $h(x,i,u)$ in the first variable, the processes $\{ (\X(x+ k_{\tau},i;t),\Lambda(x+k_\tau,i;t))\}_{t\ge0}$ and $\{ (\X(x,i;t)+ k_{\tau},$\linebreak $\Lambda(x,i;t))\}_{t\ge0}$   are indistinguishable. In particular, $$ \p(t,(x+ k_{\tau},i),B\times\{j\})\,=\, \p(t,(x,i),B- k_{\tau}\times\{j\})$$ for all  $B\in\mathfrak{B}(\R^d),$
which implies that the semigroup $\{\PP_t\}_{t\ge0}$ preserves the class of $ \tau$-periodic functions in $\mathcal{B}_b(\R^d\times[m])$. Thus,  according to \cite[Proposition 3.8.3]{Kolokoltsov-Book-2011}   the projection of   $\{( \X(x,i;t),$\linebreak $\Lambda(x,i;t))\}_{t\ge0}$ via $\Pi_{\tau}(x,i)$ onto  $\mathbb{T}^d_{\tau}\times[m]$, denoted by   $\{( \X^{\tau}(x,i;t),\Lambda(x,i;t))\}_{t\ge0}$, is a Markov process on $(\mathbb{T}^d_{\tau}\times[m],\mathfrak{B}(\mathbb{T}^d_{\tau}\times[m]))$ with transition kernel  
\begin{equation}\label{ES2.6}
 \p^{\tau}(t,(x,i),B\times\{j\})\,=\, \p\bigl(t,(z_x,i),\Pi_{\tau}^{-1}(B\times\{j\})\bigr)\end{equation} for  $(x,i)\in \mathbb{T}^d_{\tau}\times[m]$, $B\times\{j\} \in \mathfrak{B}(\mathbb{T}^d_{\tau}\times[m])$, and $(z_x,i) \in \Pi_{\tau}^{-1}(\{(x,i)\})$.
In particular, 
$\{(\X^{\tau}(x,i;t),\Lambda(x,i;t))\}_{t\ge0}$ is  a $\mathcal{C}_b$-Feller  process.
Together with    \cite[Theorem 3.1]{Meyn-Tweedie-AdvAP-II-1993}, this implies that $\{( \X^{\tau}(x,i;t),\Lambda(x,i;t))\}_{t\ge0}$ admits at least one invariant probability measure. Recall,  
 a Borel measure $\uppi$ on $\mathfrak{B}(\mathbb{T}^d_{\tau}\times[m])$  is said to be invariant for $\{(\X^{\tau}(x,i;t),\Lambda(x,i;t))\}_{t\ge0}$ if $$\int_{\mathbb{T}^d_{\tau}\times[m]} \p^{\tau}(t,(x,i),B\times\{j\})\,\uppi(\D x\times\{i\})\,=\,\uppi(B\times\{j\})$$
for $t\ge0$ and $B\times\{j\} \in \mathfrak{B}(\mathbb{T}^d_{\tau}\times[m])$.
Under the additional assumption (A3), stated below,  we  show that $\{( \X^{\tau}(x,i;t),\Lambda(x,i;t))\}_{t\ge0}$ 
admits a unique invariant probability measure. Moreover, the corresponding marginal distributions  converge as $t\to\infty$  to this  invariant measure in  total variation norm, and the convergence occurs at an exponential rate. We impose the following assumption:

\smallskip

\begin{description}
	\item[(A3)] (Irreducibility)  For any $i,j \in [m]$ with $i \neq j$, there exist $k \in \mathbb{N}$ and indices $i_0, \dots, i_k \in [m]$ satisfying $i_0 = i$, $i_k = j$, and $i_l \neq i_{l+1}$ for all $l = 0, \dots, k-1$, for which the set $$\{x\in \R^d:\mathrm{q}_{i_l,i_{l+1}}(x)>0\}$$ has positive Lebesgue measure for all $l = 0,\ldots,k-1$.
\end{description}	

\smallskip

\begin{proposition} \label{P2.1} Assume (A1)-(A3). Then, for all $t>0$ and $(x,i)\in\mathbb{T}_\tau^d\times[m]$, $$\p^\tau(t,(x,i),B\times\{j\})\,>\,0 $$ for all  $B\times\{j\}\in\mathfrak{B}(\mathbb{T}_\tau^d\times[m])$ satisfying $\mathrm{Leb}(\mathrm{pr}_{\R^d}\circ\Pi_{\tau}^{-1}(B\times\{j\}))>0.$ Here, $\mathrm{Leb}$ denotes the Lebesgue measure on $\mathfrak{B}(\R^d)$, and $\mathrm{pr}_{\R^d}:\R^d\times[m]\to\R^d$ is the projection mapping onto the first coordinate.
\end{proposition}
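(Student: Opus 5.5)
We need to show that the projected process charges every set $B\times\{j\}$ whose lift has positive Lebesgue measure, at every time $t>0$. The plan is to work on $\R^d\times[m]$ and use \eqref{ES2.6}: it suffices to show that
\[
\p(t,(z,i),\Pi_\tau^{-1}(B\times\{j\}))>0\quad\text{for every } z\in\R^d,
\]
with $A\df \mathrm{pr}_{\R^d}\circ\Pi_\tau^{-1}(B\times\{j\})$ a $\tau$-periodic set of positive Lebesgue measure. We treat the regime component by a chain of switches and the spatial component by the positivity of stable-like transition densities.

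\textbf{Step 1 (one fixed regime).} For fixed $i$, consider the process killed at the first switching time. This is the single-regime stable-like process with generator $\mathcal{L}_i$, killed at the bounded rate $-\mathrm{q}_{ii}(x)\le \|\mathcal{Q}\|_\infty$. Using the Lipschitz continuity and the bounds in (A1), the known heat-kernel results for stable-like processes (Kolokoltsov, Chen--Zhang) give a continuous transition density $p_i(s,x,y)$ that is strictly positive for all $s>0$ and $x,y\in\R^d$. Alternatively, one can argue directly: the jump kernel $\gamma(x,i)|y|^{-d-\alpha(x,i)}$ is bounded below by a positive multiple of a rotationally invariant kernel on $\mathscr{B}_1(0)$, which yields positivity and full support. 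Since the killing rate is bounded, the sub-Markov density of the killed process is at least $\E^{-\|\mathcal{Q}\|_\infty s}p_i(s,x,y)>0$.

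\textbf{Step 2 (regime chain).} If $j=i$, Step 1 already gives
\[
\p(t,(z,i),A\times\{i\})\ \ge\ \E^{-Ct}\int_A p_i(t,z,y)\,\D y\ >\ 0 .
\]
If $j\ne i$, take the chain $i=i_0,\dots,i_k=j$ from (A3) and the sets $U_l\df\{\mathrm{q}_{i_l i_{l+1}}>0\}$, each of positive measure. Split $[0,t]$ into $k+1$ pieces. Using the strong Markov property at the successive switching times (a first-jump decomposition of the Dynkin/Duhamel type), bound the probability from below by
\[
\int_{0<s_1<\dots<s_k<t}\int p_{i_0}(s_1,z,y_1)\,\mathrm{q}_{i_0i_1}(y_1)\,p_{i_1}(s_2-s_1,y_1,y_2)\,\mathrm{q}_{i_1i_2}(y_2)\cdots p_{i_k}(t-s_k,y_k,A)\,\D y\,\D s .
\]
Here each $p$ denotes the killed density. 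The integrand is positive on $U_0\times\dots\times U_{k-1}$ times a set of times of positive measure, so the bound is strictly positive. The precise justification is that the switching jumps come from $\mathrm{N}_\mathrm{q}$ with intensity $\mathrm{q}_{i_l i_{l+1}}(X(s-))$, so the density of the first switch to $i_{l+1}$ at time $s$ with position $y$ is $p_{i_l}(s,x,y)\,\mathrm{q}_{i_li_{l+1}}(y)$.

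\textbf{Main obstacle.} The main difficulty is Step 1: obtaining strict positivity (or at least absolute continuity with an everywhere-positive density) for a single-regime stable-like process with variable order $\alpha(x,i)$. If I cannot cite a heat-kernel result directly, I would instead prove the weaker statement that the process reaches any ball with positive probability at any positive time, together with a lower bound on the density. To do this I would compare with a Lévy process via small-jump/large-jump decomposition, using a Lévy-system argument: with positive probability, finitely many large jumps carry the path close to a target point $y$, and the remaining small-jump motion stays localized. This yields positivity of $\int_A p_i(s,x,y)\,\D y$ for every $A$ of positive measure, which is all that the convolution in Step 2 needs. Finally, periodicity (A2) ensures that the choice of lift $z_x$ is immaterial, which completes the argument.
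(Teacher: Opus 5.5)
Your proposal is correct and follows essentially the same route as the paper. The paper reduces to showing $\p(t,(x,i),B\times\{j\})>0$ on $\R^d\times[m]$ for $\mathrm{Leb}(B)>0$. It then uses the iterated lower bound from the proof of \cite[Theorem 4.8]{Xi-Yin-Zhu-2019}, whose prefactor $\E^{-\Theta t}$ comes from $\inf \mathrm{q}_{ii}\ge-\Theta$ exactly as in your killed-density bound. It combines this with positivity of the single-regime kernels from \cite[Theorem 1.1 and Proposition 5.7]{Chen-Chen-Wang-2020}, a result that covers variable order $\alpha(x,i)$, and with the switching chain supplied by (A3).

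One caution concerns your fallback ``direct'' arguments in Step 1 and the main-obstacle paragraph. Bounding the jump kernel below, or steering the path into balls with large jumps, gives full support. It does not show that every set of positive Lebesgue measure is charged, which is what the statement needs. That requires a genuine density lower bound, so citing the heat-kernel result, as the paper does, is the necessary step rather than an optional shortcut.
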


\begin{proof}
Clearly, it suffices to show that for all $t>0$,  $(x,i)\in\R^d\times[m]$ and $j\in[m]$, $$ \p\bigl(t,(x,i),B\times\{j\}\bigr)\,>\,0
$$ for all $B\in \mathfrak{B}(\R^d)$ with $\mathrm{Leb}(B)>0$. In particular, this implies that $\{(\X(x,i;t),\Lambda(x,i;t)\}_{t\ge0}$ and its skeleton Markov chain  $\{(\X(x,i;l),\Lambda(x,i;l)\}_{l\ge0}$ are irreducible  in the sense of \cite{Down-Meyn-Tweedie-1995}, with respect to the measure  $\upphi(B\times\{i\})\df\mathrm{Leb}(B)$.
 From
the proof of \cite[Theorem 4.8]{Xi-Yin-Zhu-2019}, we have that
\begin{equation*}
\begin{aligned}\label{eq:irr}
&\p(t,(x,i), B \times \{ j \} ) \,\ge\, 
\updelta_{ij}\, \Prob( \X^{(i)}(x;t)\in B)\, \E^{-\Theta t}\\ &+ \E^{-\Theta t}\sum_{k=1}^\infty \underset{0<t_1<\ldots<t_k<t}{\idotsint} \underset{\substack{i_0,\ldots,i_k \in [m] \\ i_l \neq i_{l+1}\\ i_0=i,\,  i_k=j}}{\sum} \int_{\R^d} \cdots \int_{\R^d} \Prob( \X^{(i_0)}(x;t_1)\in \D y_1)\, \mathrm{q}_{i_0i_1}(y_1) &\\
& \ \ \ \ \Prob( \X^{(i_1)}(y_1;t_2-t_1)\in \D y_2) \mathrm{q}_{i_1i_2}(y_2) \cdots \mathrm{q}_{i_{k-1}i_k}(y_k) \Prob( \X^{(i_k)}(y_k;t-t_k)\in B)\, \D t_1 \ldots \D t_k\,,
\end{aligned}
\end{equation*}
where $\updelta_{ij}$ denotes the Kronecker delta, and the constant $\Theta>0$ satisfies $$\inf_{(x,i)\in\R^d\times[m]}\mathrm{q}_{ii}(x)\,\ge\,-\Theta$$  (recall (A1) and (A2)). For each $i\in[m]$, the process $\{\X^{(i)}(x;t)\}_{t\ge0}$ is the unique, nonexplosive strong solution to 
\begin{align*}\D\X^{(i)}(x;t)&\,=\,\int_{\mathscr{B}_1(0)}c(X^{(i)}(x;t-),i,y)\,( {\rm N}_\mathrm{L}(\D t,\D y)-\D t\times\upnu_\mathrm{L}(\D  y))\\ &\ \ \ \  +\int_{\mathscr{B}^c_1(0)}c(X^{(i)}(x;t-),i,y)\,{\rm N}_\mathrm{L}(\D t,\D y)\\
\X^{(i)}(x;0)&\,=\,x\in\R^d\,. \end{align*} 
It is a
strong Markov process with c\`{a}dl\`{a}g sample paths, generated by $\mathcal{L}_i$ (see \cite[Proposition 2.1]{Xi-Yin-2011} ), and has transition kernel $${\rm p}^{(i)}(t,x,\D y)\,:=\,\Prob(\X^{(i)}(x;t)\in\D y)\,,$$ which satisfies ${\rm p}^{(i)}(t,x,B)>0$ for every $(t,x)\in(0,\infty)\times\R^d$ and $B \in\mathfrak{B}(\R^d)$ with $\mathrm{Leb}(B)>0$ (see \cite[Theorem 1.1 and Proposition 5.7]{Chen-Chen-Wang-2020}). 
In particular, for $i=j$, it follows that $\p(t,(x,i), B \times \{ j \} ) \,\ge\, 
 \Prob( \X^{(i)}(x;t)\in B)\, \E^{-\Theta t}$ for all $i\in[m]$, $(t,x)\in(0,\infty)\times\mathbb{R}^d$ and $B\in\mathfrak{B}(\mathbb{R}^d)$, and for $i\neq j$, there exist $k\in\mathbb{N}$ and $i_0,\dots,i_k\in[m]$ such that $i_0=i$, $i_k=j$, and $i_l\neq i_{l+1}$, and $$\mathrm{Leb}\bigl(\{x\in\mathbb{R}^d\colon\mathrm{q}_{i_l i_{l+1}}(x)>0\}\bigr)\,>\,0$$ for $l=0,\dots,k-1$, which in turn implies that $\mathrm{p}(t,(x,i),B\times\{j\})>0$ whenever $\mathrm{Leb}(B)>0$. The assertion of the proposition follows.
\end{proof}

\noindent From the previous proposition, it follows that for any  $O\in\mathfrak{B}(\mathbb{R}^d)$ with $\mathrm{Leb}(O)>0$, the processes 
 $\{( \X^{\tau}(x,i;t),$ $\Lambda(x,i;t))\}_{t\ge0}$  and    $\{(\X^\tau(x,i;l),\Lambda(x,i;l)\}_{l\ge0}$ are  irreducible with respect to the measure $$\uppsi(B\times\{i\})\,\df\, \mathrm{Leb}(\mathrm{pr}_{\R^d}\circ\Pi_{\tau}^{-1}(B\times\{i\})\cap O)\,,\qquad B\times\{i\}\in\mathfrak{B}(\mathbb{T}_\tau^d\times[m])\,.$$ Note that if $O$ is bounded, then $\uppsi$ is a finite measure. The irreducibility of $\{(\X^\tau(x,i;t),$ \linebreak$ \Lambda(x,i;t))\}_{t\ge0}$ immediately implies that it admits a unique invariant probability measure $\uppi$. Indeed, by \cite[Theorem 2.3]{Tweedie-1994}, every irreducible Markov process is either transient or recurrent. Recall, $\{ (\X^\tau(x,i;t), \Lambda(x,i;t)) \}_{t \ge 0}$ is said to be
 \begin{enumerate}
 	\item [(i)]
 	transient if it is  irreducible with respect to a $\sigma$-finite measure $\uppsi$ on $\mathfrak{B}(\mathbb{T}^d_{\tau}\times[m])$, and  there exists a countable
 	covering of $\mathbb{T}^d_{\tau}\times[m]$ with sets
 	$\{B_\ell\times\{j_\ell\}\}_{\ell\in\N}\subseteq\mathfrak{B}(\mathbb{T}^d_{\tau}\times[m])$, and for each
 	$\ell\in\N$ there exists a finite constant $c_\ell\ge0$ such that
 	$\int_0^{\infty}\p^{\tau}(t,(x,i),B_\ell\times\{j_\ell\})\,\D{t}\le c_\ell$ holds for all $(x,i)\in\mathbb{T}_\tau^d\times[m]$.
 	\item [(ii)]
 	recurrent if it is irreducible with respect to a $\sigma$-finite measure $\uppsi$ on $\mathfrak{B}(\mathbb{T}^d_{\tau}\times[m])$, and $\uppsi(B\times\{j\})>0$ implies that
 	$\int_{0}^{\infty}\p^{\tau}(t,(x,i),B\times\{j\})\,\D{t}=\infty$ for all $(x,i)\in\mathbb{T}_\tau^d\times[m]$.
 \end{enumerate}

\smallskip

 \noindent Since $\{ (\X^\tau(x,i;t), \Lambda(x,i;t)) \}_{t \ge 0}$ admits at least one invariant probability measure, it must be recurrent. Assume this is not the case, i.e.,
assume that $\{ (\X^\tau(x,i;t), \Lambda(x,i;t)) \}_{t \ge 0}$  is  transient 
 and that it admits a finite invariant measure $\uppi$. 
 Fix some $t>0$. Then, for each $\ell\in\N$, with $c_\ell$ and $B_\ell\times\{j_\ell\}$ as in (i) above, we have
 \begin{equation*}
 t\,\uppi(B_\ell\times\{j_\ell\})
 =\int_0^t\int_{\mathbb{T}^d_{\tau}\times[m]} \p^{\tau}(t,(x,i),B_\ell\times\{j_\ell\})\,\uppi(\D x\times\{i\})\,\D s
 \le c_\ell\uppi(\mathbb{T}^d_{\tau}\times[m]).
 \end{equation*}
 Letting  $t\to\infty$ yields a contradiction. By \cite[Theorem 2.6]{Tweedie-1994},  every recurrent Markov process admits a unique (up to constant multiples) invariant measure. Hence, $\{ (\X^\tau(x,i;t), \Lambda(x,i;t)) \}_{t \ge 0}$ admits a unique invariant probability measure.

\begin{proposition} \label{P2.2}Under (A1)-(A3),
	there exists constants $\gamma>0$ and $\varGamma>0$ such that $$\sup_{(x,i)\in\mathbb{T}_\tau^d\times[m]}\|\p^{\tau}(t,(x,i),\D y\times\{j\})-\uppi(\D y\times\{j\})\|_{\mathrm{TV}}\,\le\,\varGamma\E^{-\gamma t}\qquad \forall\,t\in[0,\infty)\,,$$ where $\lVert\cdot\rVert_{\mathrm{TV}}$ denotes the total variation norm on the space of
	signed measures on $\mathfrak{B}(\mathbb{T}_\tau^d\times[m])$.
\end{proposition}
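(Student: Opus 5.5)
The plan is to prove uniform ergodicity of the projected process $\{(\X^\tau(x,i;t),\Lambda(x,i;t))\}_{t\ge0}$ via a Doeblin-type argument. The state space $\mathbb{T}^d_\tau\times[m]$ is compact, so it suffices to show that the whole space is petite (equivalently small) for the skeleton chain $\{(\X^\tau(x,i;l),\Lambda(x,i;l))\}_{l\ge0}$. Then \cite{Down-Meyn-Tweedie-1995} (or Meyn--Tweedie's uniform ergodicity theorem) gives exponential convergence in total variation, uniformly in the initial point. Passing from integer to real times is routine, because $t\mapsto\sup_{(x,i)}\|\p^\tau(t,(x,i),\cdot)-\uppi\|_{\mathrm{TV}}$ is nonincreasing (invariance of $\uppi$ and contraction of the semigroup in total variation). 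The constant $\varGamma$ absorbs the bounded range $t\in[0,1]$, using the trivial bound $2$ on total-variation distance.

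First I establish aperiodicity and the $T$-chain property. By \Cref{P2.1}, $\p^\tau(t,(x,i),\cdot)$ charges every set of positive (projected) Lebesgue measure for every $t>0$. Hence the skeleton is $\uppsi$-irreducible with $\uppsi$ the projected Lebesgue measure on the torus, and it is aperiodic, since there is positive mass after one step from any point into any positive-measure set. Next, the $\mathcal{C}_b$-Feller property of $\{\PP_t\}_{t\ge0}$ lifts to the torus by \eqref{ES2.6}. Combined with irreducibility of an open-set-charging measure, the results of Meyn--Tweedie (Tweedie 1994, Theorem 3.4 / Proposition 6.2.8 in the chain case) give that the skeleton is a $T$-chain. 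For a $\uppsi$-irreducible $T$-chain, every compact set is petite. Therefore $\mathbb{T}^d_\tau\times[m]$ itself is petite, and by aperiodicity it is small: there exist $n_0$, $\varepsilon>0$ and a probability $\nu$ with $\p^\tau(n_0,(x,i),\cdot)\ge\varepsilon\nu(\cdot)$ for all $(x,i)$.

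With the Doeblin minorization on the whole state space, the standard coupling argument yields $\sup_{(x,i)}\|\p^\tau(kn_0,(x,i),\cdot)-\uppi\|_{\mathrm{TV}}\le 2(1-\varepsilon)^k$. Here $\uppi$ is the unique invariant measure obtained above. One could equally quote \cite[Theorem 16.0.2]{Meyn-Tweedie-AdvAP-II-1993}-type uniform ergodicity (condition: state space small). Monotonicity in $t$ then gives the claim with $\gamma=-\log(1-\varepsilon)/n_0$.

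The main obstacle is the step from irreducibility plus weak Feller to a \emph{uniform} minorization. Irreducibility only gives pointwise positivity, so the $T$-chain route is essential. One must check that the Feller property holds on the torus and that $\operatorname{supp}\uppsi$ has nonempty interior, which holds because the Lebesgue measure of the torus is full. If the $T$-chain citation is unsatisfactory, I would instead prove lower semicontinuity in $(x,i)$ of $\p^\tau(1,(x,i),O\times\{j\})$ for open $O$ via the Feller property and the portmanteau theorem. By compactness, this yields a uniform positive lower bound on balls, from which the Doeblin condition follows after one more step using \Cref{P2.1}.
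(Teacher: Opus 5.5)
Your proposal is correct and follows essentially the paper's route. Feller continuity plus irreducibility with respect to a measure of full support makes the process on $\mathbb{T}^d_\tau\times[m]$ a $T$-process, so the compact state space is petite; \Cref{P2.1} gives aperiodicity; and a petite (hence small) state space gives uniform exponential ergodicity. The only difference is packaging: you work with the $1$-skeleton, a Doeblin coupling and the monotonicity of $t\mapsto\sup_{(x,i)}\|\p^\tau(t,(x,i),\cdot)-\uppi\|_{\mathrm{TV}}$, whereas the paper applies the continuous-time result \cite[Theorem 5.2]{Down-Meyn-Tweedie-1995} with the constant Lyapunov function $\tilde V\equiv1$.

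One caution: the fallback in your last paragraph does not work as sketched. Lower semicontinuity and compactness only give $\inf_{(x,i)}\p^\tau(1,(x,i),O\times\{j\})>0$ for open $O$. \Cref{P2.1} gives only pointwise positivity of $\p^\tau(1,(y,j),\cdot)$, not a minorizing measure common to all $y\in O$. So you still need the existence of a small set, which is exactly what the $T$-chain theorem supplies, and the fallback cannot bypass it.
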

\begin{proof} First, \cite[Theorems 5.1 and 7.1]{Tweedie-1994}, together with the $\mathcal{C}_b$-Feller property and irreducibility  of $\{( \X^{\tau}(x,i;t),\Lambda(x,i;t))\}_{t\ge0}$ with respect to $\uppsi$, it follows   that $\mathbb{T}_\tau^d\times[m]$ is a petite set for  the process (see \cite{Tweedie-1994} for  the definition of petite sets). Next, from \cite[Theorem 4.2]{Meyn-Tweedie-AdvAP-III-1993} (with $c=d=1$, $C=\mathbb{T}_\tau^d\times[m]$, $f(x,i)=V(x,i)\equiv1$, and $\mathcal{A}V(x,i)=\mathcal{L}V(x,i)\equiv0$), \Cref{P2.1} and \cite[Proposition 6.1]{Meyn-Tweedie-AdvAP-II-1993}, we see that $\{(\X^{\tau}(x,i;t),\Lambda(x,i;t))\}_{t\ge0}$ is aperiodic in the sense of \cite{Down-Meyn-Tweedie-1995}. The desired result then follows from \cite[Theorem 5.2]{Down-Meyn-Tweedie-1995} by taking $c=b=1$, $C=\mathbb{T}_\tau^d\times[m]$, $\tilde V(x,i)\equiv1$, and $\tilde{\mathcal{A}}\tilde V(x,i)=\mathcal{L}\tilde V(x,i)\equiv0$.
\end{proof}

\medskip

\noindent As a consequence of \Cref{P2.2},  we conclude  that for any $f\in\mathcal{B}_b(\mathbb{T}_\tau^d\times[m],\R)$ satisfying $\uppi(f)=0$, \begin{equation}\label{ES2.7}\sup_{(x,i)\in\mathbb{T}_\tau^d\times[m]}\|{\mathcal{P}}_t^{\tau}f(x,i)\|\,\le\,\varGamma\sup_{(x,i)\in\mathbb{T}_\tau^d\times[m]}\|f(x,i)\|\E^{-\gamma t} \qquad \forall\,  t\ge0\,,\end{equation}
where $\{\mathcal{P}_t^{\tau}\}_{t\ge0}$
denotes the semigroup of the process $\{( \X^{\tau}(x,i;t),\Lambda(x,i;t))\}_{t\ge0}$.


\section{Non-central limit theorem for the process $\process{\X}$}
\label{S3}

In this section, we prove the main result of this paper: a functional non-central limit theorem for the process $\process{\X}$.
Our approach is based on stochastic analysis methods, specifically the characteristics of semimartingales (see \cite{Jacod-Shiryaev-2003}). Let
$(\Omega,\mathcal{F},\{\mathcal{F}_t\}_{t\ge0},\mathbb{P},\{\mathrm{S}(t)\}_{t\ge0})$, which we denote simply  by
$\{\mathrm{S}(t)\}_{t\ge0}$ in the sequel, be a $d$-dimensional semimartingale. Let $h:\R^{d}\to\R^{d}$ be a truncation function, i.e., a
bounded Borel measurable function satisfying $h(x)=x$ in a neighborhood of
the origin.
Define
$$\check{\mathrm{S}}(h)(t)\,:=\,\sum_{s\leq t}(\Delta \mathrm{S}(s)-h(\Delta \mathrm{S}(s)))\quad
\textrm{and} \quad \mathrm{S}(h)(t)\,:=\,\mathrm{S}(t)-\check{\mathrm{S}}(h)(t)\,,\qquad t\geq0\,,$$ where the process
$\{\Delta \mathrm{S}(t)\}_{t\ge0}$ is defined as $\Delta \mathrm{S}(0):=\mathrm{S}(0)$ and $\Delta \mathrm{S}(t):=\mathrm{S}(t)-\mathrm{S}(t-)$ for $t>0$. The process $\{\mathrm{S}(h)(t)\}_{t\ge0}$ is a special
	semimartingale, i.e., it admits a unique decomposition
\begin{equation*}\label{SS}\mathrm{S}(h)(t)\,=\,\mathrm{S}(0)+M(h)(t)+B(h)(t)\,,\end{equation*} where $\{M(h)(t)\}_{t\ge0}$ is a local
martingale and $\{B(h)(t)\}_{t\ge0}$ is a predictable process of bounded
variation. Let $N(\omega,\D t,\D y)$ denote the
	compensator of the jump measure
	$$\upmu(\omega,\D t,\D y)\,:=\,\sum_{s:\,\Delta \mathrm{S}(s)(\omega)\neq 0}\updelta_{(s,\Delta \mathrm{S}(s)(\omega))}(\D t,\D y)\,,$$  and let $\{C(t)\}_{t\ge0}=\{(C^{kl}(t))_{1\leq k,l\leq d})\}_{t\geq0}$ be the quadratic co-variation
	process of $\{\mathrm{S}^{c}(t)\}_{t\ge0}$, the continuous martingale part of
	$\{\mathrm{S}(t)\}_{t\ge0}$. Here, $\updelta_p(\D p)$ denotes the Dirac measure 
	at point $p$.  The triplet $(B(h),C,N)$ is called
	the characteristics of the semimartingale $\{\mathrm{S}(t)\}_{t\ge0}$
	(relative to $h(x)$). Furthermore, let $\{\tilde{C}(h)(t)\}_{t\ge0}=\{(\tilde{C}(h)^{kl}(t))_{1\leq k,l\leq d})\}_{t\geq0}$ denote the quadratic covariation
	process of  $\{M(h)(t)\}_{t\ge0}$.  The triplet $(B(h),\tilde{C}(h),N)$ is called
	the modified characteristics of the semimartingale $\{\mathrm{S}(t)\}_{t\ge0}$
	(relative to $h(x)$). Equivalently, $(B(h),C,N)$ is 
	the characteristics of  $\{\mathrm{S}(t)\}_{t\ge0}$ relative to a truncation function $h(x)$ if, for every $f\in\mathcal{C}^2_b(\R^d)$, the process \begin{align*}&\Big\{f(\mathrm{S}(t))-f(\mathrm{S}(0))-\int_0^t\langle \nabla f(\mathrm{S}(s-),\D B(h)(s))\rangle-\frac{1}{2}\int_0^t\mathrm{Tr}(\nabla^2f(\mathrm{S}(s-))\D C^T(s))\\
	&-\int_0^t\int_{\R^d}\left(f(\mathrm{S}(s-)+y)-f(\mathrm{S}(s-))-\langle\nabla  f(\mathrm{S}(s-)), h(y)\rangle\right)\, N(\omega,\D s,\D y)\Big\}_{t\ge0}\end{align*} is a local martingale (see \cite[Theorem II.2.42]{Jacod-Shiryaev-2003}. Let $f\in\mathcal{C}^2_b(\R^d)$ be arbitrary, and let $\tilde{f}\in\mathcal{C}^2_b(\R^d\times[m])$ be defined as $\tilde{f}(x,i)\df f(x).$ As we have already commented, the process
	\begin{align*} &\tilde{f}(\X(x,i;t),\Lambda(x,i;t))-\tilde{f}(x,i)-\int_0^t\mathcal{L}\tilde{f}(\X(x,i;s-),\Lambda(x,i;s-))\,\D s\\
	&=\, f(\X(x,i;t))-f(x)-\int_0^t\mathcal{L}_{\Lambda(x,i;s-)}f(\X(x,i;s-))\,\D s\\
	&=\, f(\X(x,i;t))-f(x)-\int_0^t\int_{\R^d}\big( f(\X(x,i;s-)+ y)-f(\X(x,i;s-))\\ &\hspace{3.5cm}-\mathbb{1}_{\mathscr{B}_1(0)}(y)\langle  y,\nabla f(\X(x,i;s-))\rangle\big)\frac{\gamma(\X(x,i;s-),\Lambda(x,i;s-))}{|y|^{\alpha(\X(x,i;s-),\Lambda(x,i;s-))+d}}\,\D s\, \D y\,, \quad t\ge0\,,
\end{align*}
is a local martingale. From the preceding discussion, we see that the characteristics of\linebreak   $\process{\X}$  relative to $h(x)=x\mathbb{1}_{\mathscr{B}_1(0)}(x)$ are given by $\{B(h)(t)\}_{t\ge0}\equiv0$, $\{C(t)\}_{t\ge0}\equiv0$ and $$N(\omega,\D t,\D y)\,=\, \frac{\gamma(\X(x,i;t-)(\omega),\Lambda(x,i;t-)(\omega))}{|y|^{\alpha(\X(x,i;t-)(\omega),\Lambda(x,i;t-)(\omega))+d}}\,\D t\, \D y\,.$$
Furthermore, by \cite[Proposition II.2.17]{Jacod-Shiryaev-2003},  $$\tilde{C}(h)^{kl}(t)\,=\, \int_0^t\int_{\mathscr{B}_1(0)}y_ky_l \frac{\gamma(\X(x,i;s-),\Lambda(x,i;s-))}{|y|^{\alpha(\X(x,i;s-),\Lambda(x,i;s-))+d}}\,\D s\, \D y\,.$$

\noindent We are now in a position to state and prove the main result of this article.

\begin{theorem}\label{T3.1} Assume (A1)-(A3). Let $$A\,\df\,\bigcup_{i\in[m]}\{x\in\R^d:\alpha(x,i)=\underline{\alpha}\}\,.$$ If $\mathrm{Leb}(A)>0$, then
	
	$$\{  \X(x,i;nt)/n^{1/\underline{\alpha}}\}_{t\ge0}\,\xRightarrow[n\to\infty]{({\rm d})}\,\{\mathrm{Y}^{\underline{\alpha}}(0;t)\}_{t\ge0}\,,$$ where
	 $\{\mathrm{Y}^{\underline{\alpha}}(x;t)\}_{t\ge0}$ is a  $d$-dimensional rotationally invariant $\underline{\alpha}$-stable L\'evy process starting from $x\in\R^d$, with L\'evy measure  $\upnu_{\mathrm{Y}}(\D y)=\overline{\gamma} \lvert y\lvert^{-d-\underline{\alpha}} \D y,$   and $$\overline{\gamma}\,=\, \int_{\{(z,j)\in\mathbb{T}^d_{\tau}\times[m]:\alpha(z,j)=\underline{\alpha}\}} \gamma(x,i)\,\uppi(\D x\times\{i\})\,>\,0\,.$$ Here,   $\ \xRightarrow[]{({\rm d})}$ denotes  convergence in the space
	 of c\`{a}dl\`{a}g functions endowed with the Skorokhod $J_1$-topology (see \cite[Chapter VI]{Jacod-Shiryaev-2003}). If $A$ has zero Lebesgue measure and
	  $$\sup_{(x,i)\in\R^d\times[m]}\mathrm{q}_{ii}(x)\,<\,0\,,$$  then $\{  \X(x,i;nt)/n^{1/\underline{\alpha}}\}_{t\ge0}$ converges in the Skorokhod $J_1$-topology to the zero process.
	\end{theorem}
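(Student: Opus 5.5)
We will use the criterion for convergence of semimartingales to a process with independent increments, \cite[Theorem VII.2.9 and Corollary VII.2.14]{Jacod-Shiryaev-2003}. The scaled process $\mathrm{X}^n(t)\df \X(x,i;nt)/n^{1/\underline{\alpha}}$ is a semimartingale. Its characteristics relative to the truncation $h(y)=y\mathbb{1}_{\mathscr{B}_1(0)}(y)$ follow from those of $\process{\X}$ computed above. After the change of variables $s\mapsto ns$, $y\mapsto n^{1/\underline{\alpha}}y$, the compensator becomes
$$N^n(\D s,\D y)\,=\,n^{1-\frac{\alpha(\X(x,i;ns-),\Lambda(x,i;ns-))}{\underline{\alpha}}}\,\frac{\gamma(\X(x,i;ns-),\Lambda(x,i;ns-))}{|y|^{\alpha(\X(x,i;ns-),\Lambda(x,i;ns-))+d}}\,\D s\,\D y\,.$$
The drift $B^n(h)$ is no longer zero, since the truncation region $\mathscr{B}_1(0)$ is mapped to $\mathscr{B}_{n^{-1/\underline{\alpha}}}(0)$. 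It equals
$$B^n(h)(t)\,=\,\int_0^t\int_{\R^d} y\bigl(\mathbb{1}_{\mathscr{B}_1(0)}(y)-\mathbb{1}_{\mathscr{B}_{n^{-1/\underline\alpha}}(0)}(y)\bigr)\,N^n(\D s,\D y)\,.$$
Rotational invariance of the kernel in $y$ makes this integral vanish identically, so $B^n(h)\equiv0$. The limit characteristics are therefore $(0,0,\D t\,\overline{\gamma}|y|^{-d-\underline{\alpha}}\D y)$, and it remains to verify two conditions. First, for every $t\ge0$ and every $g\in\mathcal{C}_b(\R^d)$ vanishing near the origin, $\int_0^t\int g\,\D N^n\to t\,\overline{\gamma}\int g(y)|y|^{-d-\underline{\alpha}}\D y$ in probability. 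Second, $\tilde C^n(h)(t)\to t\,\overline{\gamma}\int_{\mathscr{B}_1(0)}y_ky_l|y|^{-d-\underline\alpha}\D y$ in probability. Both are additive functionals of the form $\int_0^t F_n(\X^\tau(x,i;ns),\Lambda(x,i;ns))\,\D s$, where $F_n(z,j)$ equals $n^{1-\alpha(z,j)/\underline{\alpha}}\gamma(z,j)\int g(y)|y|^{-\alpha(z,j)-d}\D y$, or its truncated second-moment analogue.

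For a fixed $(z,j)$, $F_n(z,j)$ converges to $F(z,j)\df\mathbb{1}_{\{\alpha(z,j)=\underline\alpha\}}\gamma(z,j)\int g(y)|y|^{-\underline\alpha-d}\D y$. This convergence is monotone and uniformly bounded, since $\sup\alpha<2$ and $\inf\alpha>0$. The convergence is not uniform in $(z,j)$, because near the boundary of $\{\alpha=\underline\alpha\}$ the factor $n^{1-\alpha/\underline\alpha}$ decays slowly. We therefore split
$$\int_0^t F_n(\cdot_{ns})\,\D s\,=\,\int_0^t (F_n-F)(\cdot_{ns})\,\D s+\int_0^t F(\cdot_{ns})\,\D s\,.$$
The second term is handled by an $L^2$ ergodic theorem. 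Setting $\bar F\df F-\uppi(F)$, we write
$$\mathbb{E}\Bigl|\int_0^t \bar F(\cdot_{ns})\,\D s\Bigr|^2\,=\,2\int_0^t\!\!\int_s^t \mathbb{E}\bigl[\bar F(\cdot_{ns})\,\mathcal{P}^\tau_{n(r-s)}\bar F(\cdot_{ns})\bigr]\,\D r\,\D s\,.$$
By \eqref{ES2.7}, $|\mathcal{P}^\tau_{n(r-s)}\bar F|\le\varGamma\|\bar F\|_\infty\E^{-\gamma n(r-s)}$, so this expression is $O(1/n)$. Since $\uppi(F)=\overline\gamma\int g|y|^{-\underline\alpha-d}\D y$, the second term converges to the desired limit.

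The first term is the main obstacle. We plan to bound its expectation by $\int_0^t \mathcal{P}^\tau_{ns}|F_n-F|(x,i)\,\D s$. By \Cref{P2.2}, $\mathcal{P}^\tau_{ns}|F_n-F|$ is within $\varGamma\|F_n-F\|_\infty\E^{-\gamma ns}$ of $\uppi(|F_n-F|)$, and the correction term integrates to $O(1/n)$. Meanwhile $\uppi(|F_n-F|)\to0$ by dominated convergence. Hence the first term tends to zero in $L^1$. This argument uses only the total variation mixing, so no continuity of $F$ is needed. The same reasoning applies to $\tilde C^n$.

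It remains to check positivity, $\overline\gamma>0$. \Cref{P2.1} shows that $\uppi$ charges every set whose projection has positive Lebesgue measure, because $\uppi(B\times\{j\})=\int \p^\tau(1,\cdot,B\times\{j\})\,\D\uppi>0$. Since $\gamma$ is bounded below and $\mathrm{Leb}(A)>0$, it follows that $\overline\gamma>0$.

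If $\mathrm{Leb}(A)=0$, the same argument gives $F\equiv0$ $\uppi$-a.e., where we use $\uppi\ll\mathrm{Leb}\otimes$counting. This absolute continuity should follow from the resolvent representation together with the assumption $\sup\mathrm{q}_{ii}<0$, which guarantees that every regime is actually left, so that $\uppi$ charges no Lebesgue-null set. Consequently all limiting characteristics vanish, and the limit is the zero process.
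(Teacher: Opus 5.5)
Your argument for $\mathrm{Leb}(A)>0$ is fine. The gap is in the case $\mathrm{Leb}(A)=0$. Everything there rests on
$\uppi(\{(z,j)\in\mathbb{T}^d_\tau\times[m]:\alpha(z,j)=\underline{\alpha}\})=0$, i.e.\ on $\uppi$ not charging Lebesgue-null sets. You only assert that this ``should follow''.

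The mechanism you offer, that $\sup\mathrm{q}_{ii}<0$ forces every regime to be left, is not what yields absolute continuity here. It comes from the jump noise within each regime. The missing ingredient is that each frozen-regime process $\{\X^{(i)}(x;t)\}_{t\ge0}$ generated by $\mathcal{L}_i$ has a transition density at every $t>0$ (heat kernel results for stable-like operators, \cite[Theorem 1.1 and Proposition 5.7]{Chen-Chen-Wang-2020}). Insert this into the series representation of $\p(t,(x,i),B\times\{j\})$ from the proof of \cite[Theorem 4.8]{Xi-Yin-Zhu-2019}. Every term there ends with a factor $\Prob(\X^{(i_k)}(y_k;t-t_k)\in B)$ with $t-t_k>0$, so $\p(t,(x,i),B\times\{j\})=0$ whenever $\mathrm{Leb}(B)=0$. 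Invariance at a fixed $t>0$, exactly as in \eqref{inv}, then transfers this to $\uppi$. This is the paper's argument.

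Incidentally, since $\mathrm{q}_{ii}\le0$, the killing factors $\exp(\int\mathrm{q}_{ii})$ in that representation are at most $1$. So this step goes through without $\sup\mathrm{q}_{ii}<0$; in the paper that hypothesis only produces the harmless factor $\E^{-\uptheta t}$. Once absolute continuity of $\uppi$ is established, the rest of your argument for the zero limit works as written.

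Apart from this, your treatment of $\mathrm{Leb}(A)>0$ is correct and essentially the paper's:
\begin{itemize}
\item the same characteristics, with $B^n(h)\equiv0$ by symmetry;
\item the same split (your $F_n-F$ is exactly the second term in \eqref{bo} and \eqref{bo2});
\item the same $\mathrm{L}^2$ estimate via \eqref{ES2.7} for the centred $F$ (the paper's $V$, $Z$);
\item the same positivity argument for $\overline{\gamma}$ via \Cref{P2.1} and invariance.
\end{itemize}
The only real variation is that you control the $\{\alpha>\underline{\alpha}\}$ part in $\mathrm{L}^1$, via $\mathcal{P}^\tau_{ns}|F_n-F|\le\uppi(|F_n-F|)+\varGamma\|F_n-F\|_\infty\E^{-\gamma ns}$ and dominated convergence. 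The paper instead centres it ($U_n$, $W_n$) and uses the $\mathrm{L}^2$ bound. Since $|F_n-F|\ge0$, your version is slightly leaner; both work.

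One citation fix: the results in \cite[Chapter VII]{Jacod-Shiryaev-2003} concern prelimit processes that themselves have independent increments, which $\{\X(x,i;nt)/n^{1/\underline{\alpha}}\}_{t\ge0}$ does not. The criterion you need is \cite[Theorem VIII.2.17]{Jacod-Shiryaev-2003}, as used in the paper.
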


\begin{proof}  The semimartingale characteristics of the process $\{\X(x,i;nt)/n^{1/\underline{\alpha}}\}_{t\ge0}$
	with respect to $h(x)=x\mathbb{1}_{\mathscr{B}_1(0)}(x)$ are given by
	 \begin{align*}B_n(h)(t)&\,\equiv\,0\,,\\ N_n(\omega,\D t,\D y)&\,=\, n^{1-\alpha(\X(x,i;nt-)(\omega),\Lambda(x,i;nt-)(\omega))/\underline{\alpha}}\frac{\gamma(\X(x,i;nt-)(\omega),\Lambda(x,i;nt-)(\omega))}{|y|^{\alpha(\X(x,i;nt-)(\omega),\Lambda(x,i;nt-)(\omega))+d}}\,\D t\, \D y\,,
	 \\ \tilde{C}_n(h)^{kl}(t)&\,=\, \int_0^t\int_{\mathscr{B}_1(0)}y_ky_l\, n^{1-\alpha(\X(x,i;ns-),\Lambda(x,i;ns-))/\underline{\alpha}} \frac{\gamma(\X(x,i;ns-),\Lambda(x,i;ns-))}{|y|^{\alpha(\X(x,i;ns-),\Lambda(x,i;ns-))+d}}\,\D s\, \D y\,.
	 \end{align*}
If the set $A$ has positive Lebesgue measure, 	  then 
from  \Cref{P2.1} it follows that\linebreak  $\p^{\tau}(t,(x,i),\{(y,j)\in\mathbb{T}^d_{\tau}\times[m]:\alpha(y,j)=\underline{\alpha}\})>0$ for all $t>0$ and $(x,i)\in\mathbb{T}^d_{\tau}\times[m]$. Hence, for fixed $t>0$, we have that 
\begin{equation}
\begin{aligned}\label{inv}&\uppi(\{(y,j)\in\mathbb{T}^d_{\tau}\times[m]:\alpha(y,j)=\underline{\alpha}\})\\&\,=\,\int_{\mathbb{T}^d_{\tau}\times[m]} \p^{\tau}(t,(x,i),\{(y,j)\in\mathbb{T}^d_{\tau}\times[m]:\alpha(y,j)=\underline{\alpha}\})\,\uppi(\D x\times\{i\})\,>\,0\,,\end{aligned}
\end{equation}
 which ensures that $\overline{\gamma}>0$.
By \cite[Theorem VIII.2.17]{Jacod-Shiryaev-2003}, the first assertion of the theorem will follow if 
	$$\tilde{C}_n(h)^{kl}(t)\ \xrightarrow[n\to\infty]{\Prob}\ t\int_{\mathscr{B}_1(0)}y_ky_l \frac{\overline{\gamma}}{|y|^{\underline{\alpha}+d}}\,\D y\,,$$
	and 
	$$\int_0^t\int_{\R^d}g(y)\, N_n(\cdot,\D s,\D y)\ \xrightarrow[n\to\infty]{\Prob}\ t \int_{\R^d}g(y) \frac{\overline{\gamma}}{|y|^{\underline{\alpha}+d}}\,\D y\,,$$ for any
	$g\in\mathcal{C}_b(\R^d)$ that vanishes in a neighborhood of the origin. Here, $\xrightarrow[]{\Prob}$ denotes convergence in probability.
		We can decompose $\tilde{C}_n(h)^{kl}(t)$ as
		\begin{equation}\label{bo}
	\begin{aligned}
	&\tilde{C}_n(h)^{kl}(t)\\
	\,&=\, \int_0^t\int_{\mathscr{B}_1(0)}y_ky_l\, \mathbb{1}_{\{u:\alpha(\X(x,i;nu-),\Lambda(x,i;nu-))=\underline{\alpha}\}}(s)  \frac{\gamma(\X(x,i;ns-),\Lambda(x,i;ns-))}{|y|^{\underline{\alpha}+d}}\,\D s\, \D y\\
	&\ \ \ \ +\int_0^t\int_{\mathscr{B}_1(0)}y_ky_l\, \mathbb{1}_{\{u:\alpha(\X(x,i;nu-),\Lambda(x,i;nu-))>\underline{\alpha}\}}(s) n^{1-\alpha(\X(x,i;ns-),\Lambda(x,i;ns-))/\underline{\alpha}}\\
	&\hspace{2.6cm} \frac{\gamma(\X(x,i;ns-),\Lambda(x,i;ns-))}{|y|^{\alpha(\X(x,i;ns-),\Lambda(x,i;ns-))+d}}\,\D s\, \D y\,.
	\end{aligned}
	\end{equation}
	From \eqref{bo}, we first show that the  second term  converges to $0$ in $\mathrm{L}^2$. Define \begin{align*}
	U_n(z,i)\,\df\,\int_{\mathscr{B}_1(0)}y_ky_l\Big(&\mathbb{1}_{\{(v,j):\alpha(v,j)>\underline{\alpha}\}}(z,i) n^{1-\alpha(z,i)/\underline{\alpha}} \frac{\gamma(z,i)}{|y|^{\alpha(z,i)+d}}\\
	&-\int_{\mathbb{T}^d_{\tau}\times[m]} \mathbb{1}_{\{(v,j):\alpha(v,j)>\underline{\alpha}\}}(x,\ell) n^{1-\alpha(x,\ell)/\underline{\alpha}} \frac{\gamma(x,\ell)}{|y|^{\alpha(x,\ell)+d}}\uppi(\D x\times\{\ell\})\Big)\D y\,.
\end{align*}	
	Observe that $U_n(z,i)$ is $\tau$-periodic and  satisfies $$\int_{\mathbb{T}^d_{\tau}\times[m]}U_n(z,i)\,\uppi(\D z\times\{i\})\,=\,0\,.$$ 
	Using the Markov property, the integration-by-parts formula, and \eqref{ES2.7}, we obtain
	\begin{align*}&\mathbb{E}\left[\left(\int_0^{t}U_n(\X(x,i;ns),\Lambda(x,i;ns))\,\D s\right)^{2}\right]\\
	&\,=\, 2\int_0^{t}\int_0^{s}\mathbb{E}[U_n(\X(x,i;ns),\Lambda(x,i;ns))U_n(\X(x,i;nr),\Lambda(x,i;nr))]\,\D r\,\D s\\
	&\,=\,2\int_0^{t}\int_0^{s}\mathbb{E}[\mathbb{E}[U_n(\X(x,i;ns),\Lambda(x,i;ns))|\mathcal{F}_{nr}]U_n(\X(x,i;nr),\Lambda(x,i;nr))]\,\D r\,\D s \\&\,=\,2\int_0^{t}\int_0^{s}\mathbb{E}[\mathcal{P}^\tau_{n(s-r)}U_n(\X^\tau(x,i;nr),\Lambda(x,i;nr))U_n(\X^\tau(x,i;nr),\Lambda(x,i;nr))]\,\D r\,\D s\\
	&\,\le\,2\Gamma\max_{j\in[m]}\lVert U_n(\cdot,j)\rVert^2_\infty\int_0^{t}\int_0^{s}\E^{-n\gamma(s-r)}\,\D r\,\D s\\
	&\,\le\,\frac{2\Gamma t\max_{j\in[m]}\lVert U_n(\cdot,j)\rVert^2_\infty}{\gamma n}\,.\end{align*}
	Moreover, 
	$$|U_n(z,i)|\,\leq\,2\sup_{(z,i)\in\R^d\times[m]} \int_{\mathscr{B}_1(0)}|y|^2\frac{\gamma(z,i)}{|y|^{\alpha(z,i)+d}}\,\D y\,<\,\infty\,.$$
	Hence, by the triangle inequality and the definition of $U_n(z,i)$, we have
	\begin{align*}&\Bigg(\mathbb{E}\Bigg[\Bigg(
		\int_0^t\int_{\mathscr{B}_1(0)}y_ky_l\, \mathbb{1}_{\{u:\alpha(\X(x,i;nu-),\Lambda(x,i;nu-))>\underline{\alpha}\}}(s) n^{1-\alpha(\X(x,i;ns-),\Lambda(x,i;ns-))/\underline{\alpha}}\\
	&\hspace{2.6cm} \frac{\gamma(\X(x,i;ns-),\Lambda(x,i;ns-))}{|y|^{\alpha(\X(x,i;ns-),\Lambda(x,i;ns-))+d}}\,\D s\, \D y
		\Bigg)^{2}\Bigg]\Bigg)^{\frac{1}{2}}\\
		&\leq
		\left(\mathbb{E}\left[\left(\int_0^{t}U_n(\X(x,i;ns),\Lambda(x,i;ns))\,\D s\right)^{2}\right]\right)^{\frac{1}{2}}\nonumber\\
		&  
		+\left(\mathbb{E}\left[\left(\int_0^{t}\int_{\mathscr{B}_1(0)}y_ky_l\int_{\mathbb{T}^d_{\tau}\times[m]} \mathbb{1}_{\{(v,j):\alpha(v,j)>\underline{\alpha}\}}(x,\ell) n^{1-\alpha(x,\ell)/\underline{\alpha}} \frac{\gamma(x,\ell)}{|y|^{\alpha(x,\ell)+d}}\uppi(\D x\times\{\ell\})\,\D y\, \D s\right)^{2}\right]\right)^{\frac{1}{2}}\,.\end{align*}
	By the dominated convergence theorem, the second term  above   converges to
	zero as $n\to\infty$. Together with the bound on the first term, this shows that the second integral in \eqref{bo} converges to $0$ in 	$\mathrm{L}^2$, as required. We now show that the first term in \cref{bo} converges in $\mathrm{L}^2$ to $$t\int_{\mathscr{B}_1(0)}y_ky_l
	 \frac{\overline{\gamma}}{|y|^{\underline{\alpha}+d}}\, \D y\,.$$
	Define
	$$V(z,i)\,\df\,\int_{\mathscr{B}_1(0)}y_ky_l\left(1_{\{(v,j):\alpha(v,j)=\underline{\alpha}\}}(z,i)\frac{\gamma(z,i)}{|y|^{\underline{\alpha}+d}}-\frac{\overline{\gamma}}{|y|^{\underline{\alpha}+d}}\right)\,\D y\,.$$
	Clearly,  $V(z,i)$ is also $\tau$-periodic and satisfies $$\int_{\mathbb{T}^d_{\tau}\times[m]}V(z,i)\,\uppi(\D z\times\{i\})\,=0\,.$$  Proceeding as in the previous argument for $U_n(z,i)$ the $\mathrm{L}^2$-convergence of the first term in \eqref{bo} follows immediately.
Next, we show 	that $$\int_0^t\int_{\R^d}g(y)\, N_n(\cdot,\D s,\D y)\ \xrightarrow[n\to\infty]{\mathrm{L}^2}\ t \int_{\R^d}g(y) \frac{\overline{\gamma}}{|y|^{\underline{\alpha}+d}}\,\D y$$ for any
$g\in\mathcal{C}_b(\R^d)$ vanishing in a neighborhood of the origin. From \eqref{bo2}, we have
	\begin{equation}\label{bo2}
\begin{aligned}
&\int_0^t\int_{\R^d}g(y)\, N_n(\cdot,\D s,\D y)\\
\,&=\, \int_0^t\int_{\R^d}g(y)\, \mathbb{1}_{\{u:\alpha(\X(x,i;nu-),\Lambda(x,i;nu-))=\underline{\alpha}\}}(s)  \frac{\gamma(\X(x,i;ns-),\Lambda(x,i;ns-))}{|y|^{\underline{\alpha}+d}}\,\D s\, \D y\\
&\ \ \ \ +\int_0^t\int_{\R^d}g(y)\, \mathbb{1}_{\{u:\alpha(\X(x,i;nu-),\Lambda(x,i;nu-))>\underline{\alpha}\}}(s) n^{1-\alpha(\X(x,i;ns-),\Lambda(x,i;ns-))/\underline{\alpha}}\\
&\hspace{2.6cm} \frac{\gamma(\X(x,i;ns-),\Lambda(x,i;ns-))}{|y|^{\alpha(\X(x,i;ns-),\Lambda(x,i;ns-))+d}}\,\D s\, \D y\,.
\end{aligned}
\end{equation}
Define \begin{align*}
W_n(z,i)\,\df\,\int_{\R^d}g(y)\Big(&\mathbb{1}_{\{(v,j):\alpha(v,j)>\underline{\alpha}\}}(z,i) n^{1-\alpha(z,i)/\underline{\alpha}} \frac{\gamma(z,i)}{|y|^{\alpha(z,i)+d}}\\
&-\int_{\mathbb{T}^d_{\tau}\times[m]} \mathbb{1}_{\{(v,j):\alpha(v,j)>\underline{\alpha}\}}(x,\ell) n^{1-\alpha(x,\ell)/\underline{\alpha}} \frac{\gamma(x,\ell)}{|y|^{\alpha(x,\ell)+d}}\uppi(\D x\times\{\ell\})\Big)\D y\,.
\end{align*}	
Analogously to $U_n(z,i)$, we have
$$\mathbb{E}\left[\left(\int_0^{t}W_n(\X(x,i;ns),\Lambda(x,i;ns))\,\D s\right)^{2}\right]\,\le\, \frac{2\Gamma t\max_{j\in[m]}\lVert W_n(\cdot,j)\rVert^2_\infty}{\gamma n}\,,$$ and
$$|W_n(z,i)|\,\leq\,2\sup_{(z,i)\in\R^d\times[m]} \int_{\R^d}\lvert g(y)\lvert\frac{\gamma(z,i)}{|y|^{\alpha(z,i)+d}}\,\D y\,<\,\infty\,.$$
Hence, the second term in \cref{bo2} converges to $0$ in $\mathrm{L}^2$. Finally, to show that the first term converges in $\mathrm{L}^2$ to 
$$t\int_{\R^d}g(y)
\frac{\overline{\gamma}}{|y|^{\underline{\alpha}+d}}\, \D y\,,$$ 
define
$$Z(z,i)\,\df\,\int_{\R^d}g(y)\left(1_{\{(v,j):\alpha(v,j)=\underline{\alpha}\}}(z,i)\frac{\gamma(z,i)}{|y|^{\underline{\alpha}+d}}-\frac{\overline{\gamma}}{|y|^{\underline{\alpha}+d}}\right)\,\D y$$ and proceed exactly as above. This completes the proof of the first part of the theorem. 

Assume now that the set $A$ has zero Lebesgue measure, and that $$\sup_{(x,i)\in\R^d\times[m]}\mathrm{q}_{ii}(x)\,\le\,-\uptheta$$ for some $\uptheta>0$. From
the proof of \cite[Theorem 4.8]{Xi-Yin-Zhu-2019}, it follows  that
\begin{align*}
&\p(t,(x,i), B \times \{ j \} ) \,\le\, 
\updelta_{ij}\, \Prob( \X^{(i)}(x;t)\in B)\, \E^{-\uptheta t}\\ &+ \E^{-\uptheta t}\sum_{k=1}^\infty \underset{0<t_1<\ldots<t_k<t}{\idotsint} \underset{\substack{i_0,\ldots,i_k \in [m] \\ i_l \neq i_{l+1}\\ i_0=i,\,  i_k=j}}{\sum} \int_{\R^d} \cdots \int_{\R^d} \Prob( \X^{(i_0)}(x;t_1)\in \D y_1)\, \mathrm{q}_{i_0i_1}(y_1) &\\
& \ \ \ \ \Prob( \X^{(i_1)}(y_1;t_2-t_1)\in \D y_2) \mathrm{q}_{i_1i_2}(y_2) \cdots \mathrm{q}_{i_{k-1}i_k}(y_k) \Prob( \X^{(i_k)}(y_k;t-t_k)\in B)\, \D t_1 \ldots \D t_k\,,
\end{align*} where we use the same notation as in the proof of \Cref{P2.1}.
From this, together with \cite[Theorem 1.1 and Proposition 5.7]{Chen-Chen-Wang-2020} and \eqref{inv}, it follows that $$\uppi(\{(x,i)\in\mathbb{T}^d_{\tau}\times[m]:\alpha(x,i)=\underline{\alpha}\})\,=\,0\,.$$ Hence, $\overline{\gamma}=0$, and  by the same reasoning as in the previous case, we conclude that \linebreak  
$\{  \X(x,i;nt)/n^{1/\underline{\alpha}}\}_{t\ge0}$ converges in the Skorokhod $J_1$-topology to the zero process. \end{proof}

\smallskip

In the case $\mathrm{Leb}(A)=0$ — for instance, when the minimum of the order function is attained periodically at a single point — \Cref{T3.1} shows that the normalization $n^{1/\underline{\alpha}}$ is too strong: although the process does visit a neighbourhood of $A$, it does so with insufficient frequency to produce a non-degenerate limit under this scaling. If a non-trivial scaling limit exists at all, we expect that the appropriate normalization should reflect the rate at which $\alpha(x,i)$ approaches its minimum value $\underline{\alpha}$. We leave it as an open problem to determine whether a non-degenerate limit can be obtained under a different normalization, and if so, to identify it.

	\section*{Acknowledgements}
	Financial support through \textit{Croatian Science Foundation} under project No. 2277,  \textit{European Union – NextGenerationEU} through the
	National Recovery and Resilience Plan 2021-2026 Institutional grant of
	University of Zagreb Faculty of Science (IK IA 1.1.3. Impact4Math), and 
\textit{DIGIT.2.1.02.016} funded by the Digital, Innovation, and Green Technology Project – DIGIT Project (IBRD Loan No. 9558‑HR) are  gratefully acknowledged. I also thank the anonymous referee for the careful reading and thoughtful comments which have led to an improvement of the paper.

\section*{Disclosure statement} 
The author reports there are no competing interests to declare.

\bibliographystyle{abbrv}
\bibliography{References}

\end{document}